\documentclass[a4paper,12pt]{article}
\usepackage[margin=1.5in]{geometry}
\usepackage{lineno}
\usepackage{amsmath,amssymb,amsthm}
\usepackage{caption}
\usepackage{subcaption}
\usepackage{ascmac,bm,framed,fancybox,mathrsfs}
\usepackage{graphicx,xcolor}
\usepackage[capitalize]{cleveref}
\usepackage{empheq}

\newtheorem{thm}{Theorem}[section] \crefname{theorem}{theorem}{theorem}
\newtheorem{prop}[thm]{Proposition} \crefname{proposition}{proposition}{proposition}
\newtheorem{lem}[thm]{Lemma} \crefname{lemma}{lemma}{lemma}
 \crefname{corollary}{corollary}{corollary}
\theoremstyle{definition}
\newtheorem{dfn}[thm]{Definition} \crefname{definition}{definition}{definition}

\theoremstyle{remark}
\newtheorem{rem}[thm]{Remark} \crefname{remark}{remark}{remark}

\numberwithin{equation}{section}

\def\Xint#1{\mathchoice
{\XXint\displaystyle\textstyle{#1}}%
{\XXint\textstyle\scriptstyle{#1}}%
{\XXint\scriptstyle\scriptscriptstyle{#1}}%
{\XXint\scriptscriptstyle\scriptscriptstyle{#1}}%
\!\int}
\def\XXint#1#2#3{{\setbox0=\hbox{$#1{#2#3}{\int}$}
\vcenter{\hbox{$#2#3$}}\kern-.5\wd0}}

\def\dashint{\Xint-}

\newcommand{\uep}[1]{u_{\epsilon}#1}

\newcommand{\Set}[2]{\left\{#1\mathrel{}\middle|\mathrel{}#2\right\}}
\newcommand{\R}{\mathbb{R}}
\newcommand{\ri}[0]{\right}
\newcommand{\lef}[0]{\left}

\begin{document}

\title{A game interpretation for the weighted $p$-Laplace equation \thanks{This work was supported by JST SPRING, Grant Number JPMJSP2119.}
}

\author{Mamoru Aihara
\thanks{Department of Mathematics, Graduate School of Science, Hokkaido University, email: \texttt{aihara.mamoru.p7@elms.hokudai.ac.jp}
}
}

            
\maketitle

\begin{abstract}
In this paper,
we obtain a stochastic approximation that converges to the viscosity solution of the weighted $p$-Laplace equation.
We consider a stochastic two-player zero-sum game controlled by a random walk, two player's choices, and the gradient of the weight function.
The proof is based on the boundary conditions in the viscosity sense and the comparison principle.
These results extend previous findings for the non-weighted $p$-Laplace equation [Manfredi, Parviainen, Rossi, 2012].
In addition,
we study the limiting behavior of the viscosity solution of the weighted $p$-Laplace equation as $p\rightarrow\infty$.
\end{abstract}

\noindent
\textbf{Keywords:}
stochastic games,
two-player zero-sum games,
viscosity solution,
weighted $p$-Laplacian

\vskip\baselineskip
\noindent
\textbf{MSC2020 Classification:} 35J92, 35J70, 35D40, 49L20, 49N70, 91A15

\section{Introduction}
In this paper, we consider a Dirichlet problem for the weighted $p$-Laplace equation:
\begin{empheq}[left={\empheqbiglbrace}]{alignat=2}
\label{A-lap}
\tag{P}
& -\Delta_{p,f} u(x)=0, & \quad  &    x \in \Omega, \\
\label{A-lapF}
\tag{F}
& u(x)=F(x), & \quad  &  x \in \partial \Omega,
\end{empheq}
where
$\Omega \subset \mathbb{R}^N$ is a bounded domain and
$p>2$ is a fixed constant.
Here $u: \overline{\Omega} \rightarrow \mathbb{R}$ is an unknown function,
$Du$ is the gradient of $u$ and 
\begin{align*}
    \Delta_{p,f} u(x):=\mathrm{div}(f(x)|Du(x)|^{p-2} Du(x)).
\end{align*}
Throughout this paper,
we assume $F$ is a continuous function defined in a neighborhood of the boundary and $f \in C^1(\Omega)$ is uniformly positive and bounded from above,
i.e.,
\begin{align*}
    0 < \inf_\Omega f \leq \sup_\Omega f < \infty .
\end{align*}

The relationship between stochastic methods and partial differential equations has yielded many intriguing results.
One of the most classical examples is the connection between the Laplace equation and Brownian motion.
Brownian motion is a stochastic process characterized by a random movement at small time intervals.
When problems such as hitting points and hitting times are considered in this context,
the Laplace equation naturally arises.
Specifically,
the distribution of hitting points and the escape time from the domain of Brownian motion correspond to the solution of the Laplace equation.
As a result,
stochastic techniques have become increasingly valuable for analyzing both the solutions and the qualitative properties of the Laplace equation.

Manfredi,
Parviainen and Rossi extended this result to the $p$-Laplace type equations in \cite{MPR12}. 
They approximated the viscosity solutions of the equation using a two-player zero-sum game called ``Tug-of-war''.
Specifically,
they proved that the value function that represents the expected value in the game uniformly converges to the viscosity solution.
Furthermore,
in recent years,
this has been extended to the normalized $p$-Laplace equation with non-homogeneous terms, parabolic type equations and obstacle problems \cite{R16, H22, LM17}.
Applications of these developments include analysis of the regularity of the solution using games and the proof of preservation of convexity \cite{LPS13,LSZ16}.
Our main result of this paper is to construct the game and prove the same result for the weighted $p$-Laplace equation.

Our game is controlled by the random walk, two player's choices, and the gradient of the weight function.
This is an extension of the game in \cite{MPR12} with a new operation.
As a result,
an additional term appears in the key formulas such as the dynami programming principle.
Although further discussions are required on the additional term,
the proof can be established by the method in \cite{DMP22, LPS13} using the comparison theorem and the half-relaxed limit.

In addition,
we consider the case as $p\rightarrow\infty$.
For the non-weighted $p$-Laplace equation,
it is known in \cite{BDM91} that its viscosity solution converges to the viscosity solution of the following infinity-Laplace equation. 
    \begin{align}
    \label{infty}
       -\Delta_\infty u(x):=-D^2u(x) Du(x)\cdot Du(x)= 0,
    \end{align}
where $D^2u$ is a Hesse matrix of $u$.
Furthermore,
in the case of the $p(x)$-Laplace equation,
it is also known in \cite{MRU10} that the solution converges to the solution of the infinity-Laplace equation with an additional term.
We obtain similar results that the viscosity solution of the weighted $p$-Laplace equation converges to the viscosity solution of the infinity-Laplace equation \eqref{infty}.

Before stating the main result,
we outline the assumptions.
\begin{itemize}
\item[$(\Omega1)$]
A bounded domain $\Omega$ satisfies the exterior cone condition,
i.e.,
for any $x\in \partial\Omega$,
there is a normal vector $n\in \R^n$ and constants $\theta \in (0,\frac{\pi}{2}]$ and $r>0$ such that 
\begin{align*}
    \{ y \in \R^n\ |\ (y-x)\cdot n \geq |y-x|\cos \theta,\ |y-x|\leq r \} \cap \overline{\Omega}=\{x\}. 
\end{align*}
\item [$(F1)$]
A boundary function $F$ is a Lipschitz continuous and the Lipschitz constant is less than or equal to one.
\end{itemize}

The main theorems are the following:
\begin{thm}
\label{main1}
Let $u_\epsilon$ be the value function given by the game and $\overline{u}$ be the upper half-relaxed limit of $\uep$ and $\underline{u}$ be the lower half-relaxed limit of $\uep$.
Then, 
$\overline{u}$ is the generalized viscosity subsolution and $\underline{u}$ is the generalized viscosity supersolution of \eqref{A-lap}\eqref{A-lapF}.
\end{thm}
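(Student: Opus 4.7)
The plan is to split the proof into an interior viscosity inequality and a boundary inequality, both extracted from the dynamic programming principle (DPP) for $u_\epsilon$ by testing with smooth functions and passing to the half-relaxed limit as $\epsilon\to 0$. Compared with the unweighted game of \cite{MPR12}, the DPP in our setting carries an additional drift-like term driven by $Df(x)$; this is the stochastic counterpart of the lower-order contribution in the expansion
\begin{align*}
-\Delta_{p,f}\phi = -f|D\phi|^{p-2}\!\left[\Delta\phi + (p-2)|D\phi|^{-2}\langle D^2\phi\, D\phi, D\phi\rangle\right] - |D\phi|^{p-2}\langle Df, D\phi\rangle.
\end{align*}
Uniform boundedness of $\{u_\epsilon\}$ on $\overline{\Omega}$, inherited from $F$ and the game dynamics, then ensures that $\overline{u}$ and $\underline{u}$ are well defined and finite on $\overline{\Omega}$.

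For the interior subsolution property I would fix $x_0\in\Omega$ together with a $C^2$ test function $\phi$ such that $\overline{u}-\phi$ attains a strict local maximum at $x_0$. A standard compactness argument for the upper half-relaxed limit furnishes sequences $\epsilon_n\to 0$ and $x_n\to x_0$ with $u_{\epsilon_n}(x_n)\to\overline{u}(x_0)$ and $u_{\epsilon_n}-\phi$ attaining a local maximum at $x_n$. Inserting the resulting pointwise bound into the DPP at $x_n$ and Taylor-expanding $\phi$ to second order over $B_{\epsilon_n}(x_n)$, the min-max part of the game produces the $(p-2)$-directional second derivative, the averaging part produces $\Delta\phi$, and the $Df$-driven part contributes the drift term associated with $\langle Df, D\phi\rangle$; after dividing by the correct power of $\epsilon_n$ and letting $n\to\infty$ these reassemble into the desired viscosity inequality for $-\Delta_{p,f}\phi(x_0)$. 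The singular case $D\phi(x_0)=0$ is accommodated by the standard restriction of admissible test functions used for the $p$-Laplace operator, as in \cite{DMP22,LPS13}.

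For the boundary inequality one verifies at each $x_0\in\partial\Omega$ that either the same PDE inequality holds or $\overline{u}(x_0)\le F(x_0)$. Here the exterior cone condition $(\Omega 1)$ and the Lipschitz hypothesis $(F1)$ enter through a barrier construction in the spirit of \cite{LPS13,DMP22}: starting from the truncated exterior cone at $x_0$ one builds a supersolution of the DPP which bounds the expected stopping value of any game trajectory initialized near $x_0$ by $F(x_0)+o(1)$ as $\epsilon\to 0$, and the upper half-relaxed limit then gives $\overline{u}(x_0)\le F(x_0)$. The supersolution statement for $\underline{u}$ follows by reversing all inequalities throughout the argument. The step I expect to be the hardest is controlling the $Df$-driven term uniformly through both the Taylor expansion and the boundary barrier: near points where $D\phi$ is small this correction can a priori compete with the principal $p$-Laplace part, and one must verify that under the $\epsilon_n$-scaling imposed by the DPP it remains of strictly lower order, so that the limiting inequality is indeed that of the full operator $\Delta_{p,f}$ rather than the unweighted $\Delta_p$ part plus an uncontrolled remainder.
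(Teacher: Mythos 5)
Your interior argument coincides with the paper's: test the half-relaxed limit with $\phi\in C^2$, transfer the extremum to points $x_\epsilon$ for $u_\epsilon$, insert the resulting comparison into \eqref{DPP}, Taylor-expand the three game components (min--max, mean value, drift along $Df$), divide by $\epsilon^2$ and pass to the limit; the paper carries out exactly this computation, splitting into the cases $Df(x_0)\neq 0$ and $Df(x_0)=0$ to handle the drift term. Your worry about the $Df$-term competing with the principal part is resolved there by the choice of $\gamma$, which makes all three contributions enter at the common order $\epsilon^2$ with the prefactor $\bigl(|Df|+2f(p+n)\bigr)^{-1}$.

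The genuine gap is in your treatment of the boundary. Theorem \ref{main1} asserts only the \emph{generalized} (relaxed) boundary condition, i.e.\ that at $x_0\in\partial\Omega$ the maximum (resp.\ minimum) of the PDE expression and of $u(x_0)-F(x_0)$ has the correct sign, and it is stated \emph{without} assuming $(\Omega 1)$ or $(F1)$. Your proposed barrier construction from the truncated exterior cone therefore uses hypotheses that are not available here, and it aims at the stronger conclusion $\overline{u}(x_0)\le F(x_0)$, which is not what is being claimed. The correct (and much shorter) argument is a dichotomy on the approximating sequence: either the points $x_\epsilon$ can be taken in $\Omega$, in which case the same DPP/Taylor computation as in the interior yields the PDE inequality at $x_0$, or they eventually lie in the boundary strip $\Gamma_\epsilon$, where $u_\epsilon\equiv F$ by the rules of the game, so that the half-relaxed limit satisfies the boundary inequality at $x_0$; in either case the max/min alternative holds. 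The entire barrier-type machinery you describe is deliberately postponed in the paper: it is encapsulated in Gripenberg's result (Theorem \ref{thG06}), which upgrades generalized solutions to solutions attaining the boundary data under $(\Omega 1)$, and is invoked only in the proof of Theorem \ref{main2}. One point where your write-up is actually more careful than the paper is the degenerate case $D\phi(x_0)=0$, which the paper passes over in silence and which does require the usual restriction on admissible test functions for the (non-normalized) $p$-Laplace operator.
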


\begin{thm}
\label{main2}
Assume $(\Omega 1)$.
Let $u_\epsilon$ be the value function given by the game and $u$ be the viscosity solution of \eqref{A-lap}\eqref{A-lapF}.
Then
\[ u_\epsilon \rightarrow u \quad \mathrm{uniformly}\ \mathrm{in}\ \overline{\Omega} \]
as $\epsilon \rightarrow 0$.
\end{thm}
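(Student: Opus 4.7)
The plan is to deduce \Cref{main2} from \Cref{main1} by a standard comparison argument for half-relaxed limits, with the exterior cone condition $(\Omega 1)$ used to pin down the boundary values. By construction $\underline{u} \leq \overline{u}$ on $\overline{\Omega}$, so the crux is to establish the reverse inequality, and the natural route is the comparison principle for the weighted $p$-Laplace equation: since \Cref{main1} yields that $\overline{u}$ is a generalized viscosity subsolution and $\underline{u}$ a generalized viscosity supersolution, applying comparison on $\overline{\Omega}$ forces $\overline{u} \leq \underline{u}$, hence $\overline{u} = \underline{u}$ everywhere.

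Before invoking comparison, I first have to promote the generalized (relaxed) boundary condition to an honest pointwise equality $\overline{u} = \underline{u} = F$ on $\partial \Omega$. This is where $(\Omega 1)$ and $(F1)$ enter: for each $x_0 \in \partial \Omega$ I plan to build local barriers based at $x_0$ by composing the cone condition with an explicit radial function adapted to the weight $f$ (e.g.\ an expression of the form $F(x_0) + L|x-x_0|^\alpha + C\,\mathrm{dist}(\cdot, \text{cone})^\beta$ with $\alpha, \beta$ chosen so that the barrier satisfies the weighted $p$-Laplace inequality in the viscosity sense up to a controllable error). Because $f$ is $C^1$ and uniformly positive and bounded, the extra first-order term coming from $Df$ only contributes a lower-order perturbation to the calculation done for the unweighted case in \cite{MPR12}, so after applying the generalized sub/supersolution property in conjunction with the barrier, one obtains $\overline{u}(x_0) \leq F(x_0) \leq \underline{u}(x_0)$.

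Once equality of half-relaxed limits is established and agrees with the unique viscosity solution $u$ of \eqref{A-lap}--\eqref{A-lapF}, the uniform convergence $u_\epsilon \to u$ on $\overline{\Omega}$ follows by the standard compactness trick: if there were a subsequence $\epsilon_k \to 0$ and points $x_k \to x_\ast \in \overline{\Omega}$ with $|u_{\epsilon_k}(x_k) - u(x_\ast)| \geq \delta$, then extracting limits along this subsequence contradicts $\overline{u}(x_\ast) = \underline{u}(x_\ast) = u(x_\ast)$. This converts the pointwise statement about half-relaxed limits into the uniform statement claimed.

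The main obstacle will be the barrier construction at the boundary. The exterior cone condition is weaker than an exterior sphere condition, so the barrier must be built on the complement of a cone rather than on a ball; one must verify that a suitable power of the distance-to-the-cone function satisfies the correct weighted $p$-Laplace inequality in the viscosity sense, and here the additional $\nabla f \cdot |Du|^{p-2} Du$ term produced by differentiating through the weight must be absorbed into the leading $p$-Laplacian contribution by taking the radial exponent small enough. Carrying out this estimate, uniformly in $\epsilon$ and in the boundary point, is the step that requires genuine work; everything else reduces to quoting comparison and \Cref{main1} and running the standard half-relaxed-limit argument.
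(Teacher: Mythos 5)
Your proof follows the same skeleton as the paper's: $\underline{u}\leq\overline{u}$ by definition, Theorem \ref{main1} for the generalized sub/supersolution properties, identification of the boundary values, the comparison principle (Theorem \ref{comparison}) for the reverse inequality in $\Omega$, and the standard compactness argument (the paper cites \cite[Remark 6.4]{CIL92}) to upgrade equality of the half-relaxed limits to uniform convergence. The one place you diverge is the step you correctly identify as the hard one: promoting the generalized boundary condition to the pointwise equality $\overline{u}=\underline{u}=F$ on $\partial\Omega$. The paper does not build barriers by hand; it invokes Theorem \ref{thG06} (Gripenberg, \cite[Theorem 2.2]{G06}), which states precisely that under $(\Omega 1)$ a generalized viscosity supersolution (resp.\ subsolution) of \eqref{A-lap}\eqref{A-lapF} is an honest one, i.e.\ satisfies the boundary inequality pointwise. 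So the ``genuine work'' you anticipate is outsourced to a citation, and your plan amounts to re-proving that lemma for the weighted operator. If you do carry it out, note two things: Theorem \ref{main2} assumes only $(\Omega 1)$ and continuity of $F$, not $(F1)$, so your barrier ansatz $F(x_0)+L|x-x_0|^{\alpha}+\cdots$ should use a modulus of continuity of $F$ rather than a Lipschitz constant; and your instinct that the extra $Df\cdot|D\phi|^{p-2}D\phi$ term is a lower-order perturbation absorbable into the leading second-order part (since $f\in C^1$ is uniformly positive and bounded) is the right one.
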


\begin{thm}
\label{main3}
    Assume $\partial\Omega$ is $C^1$ and $(F1)$.
    Let $u_p$ be the viscosity solution of \eqref{A-lap}\eqref{A-lapF} and $u_\infty$ be the viscosity solution of \eqref{infty}\eqref{A-lapF}.
    Then 
    \[u_p \rightarrow u_\infty\quad\mathrm{uniformly}\ \mathrm{in}\ \overline{\Omega}\]
    as $p\rightarrow\infty.$
\end{thm}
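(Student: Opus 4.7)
The plan is to follow the standard route for the $p \to \infty$ limit of $p$-harmonic functions, adapted to accommodate the weight $f$. The key observation is that when the divergence form operator is expanded,
\[
-\Delta_{p,f} u = -f|Du|^{p-2}\Delta u - (p-2)f|Du|^{p-4}\Delta_\infty u - |Du|^{p-2}\, Df\cdot Du,
\]
and divided by $(p-2)f|Du|^{p-4}$, the weight survives only as a positive prefactor on the $\Delta_\infty$-term, while the contribution of $Df$ is damped by a factor $1/(p-2)$. Thus the formal limit equation is $-\Delta_\infty u = 0$ and the weight drops out; the task is to justify this rigorously in the viscosity sense.

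First I would establish a uniform $L^\infty$ bound via the comparison principle, $\min_{\partial\Omega} F \le u_p \le \max_{\partial\Omega} F$ on $\overline{\Omega}$. The more delicate step is a uniform-in-$p$ modulus of continuity. Under $(F1)$ the McShane--Whitney extension $G(x):=\inf_{y\in\partial\Omega}\{F(y)+|x-y|\}$ is $1$-Lipschitz on $\R^N$ and agrees with $F$ on $\partial\Omega$. Since $u_p$ is the minimizer of the weighted Dirichlet energy $\int_\Omega f|Du|^p\,dx$ with boundary value $F$, the competitor $G$ and the bounds $0<c_1\le f\le c_2<\infty$ yield
\[
\int_\Omega |Du_p|^p\,dx \;\leq\; \frac{c_2}{c_1}\int_\Omega |DG|^p\,dx \;\leq\; \frac{c_2}{c_1}\,|\Omega|.
\]
Fixing any $q>N$ and using Hölder's inequality gives $\|Du_p\|_{L^q(\Omega)} \le C(q,\Omega,f)$ uniformly in $p\ge q$. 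Since $\partial\Omega$ is $C^1$, Morrey's embedding yields a uniform bound of $u_p$ in $C^{0,1-N/q}(\overline{\Omega})$, and a diagonal argument in $q\uparrow\infty$ combined with Arzelà--Ascoli produces a subsequence $u_{p_k}\to v$ uniformly on $\overline{\Omega}$ with $v = F$ on $\partial\Omega$.

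Next I would identify $v$ as the viscosity solution $u_\infty$ of \eqref{infty}\eqref{A-lapF}. For the subsolution property, let $\phi\in C^2$ touch $v$ strictly from above at an interior point $x_0$. Standard uniform-convergence reasoning gives points $x_k\to x_0$ at which $\phi$ (after an arbitrarily small quadratic perturbation) touches $u_{p_k}$ from above, so $-\Delta_{p_k,f}\phi(x_k)\leq 0$. Assuming $D\phi(x_0)\neq 0$, for $k$ large $D\phi(x_k)\neq 0$, and dividing by $(p_k-2)f(x_k)|D\phi(x_k)|^{p_k-4}$ gives
\[
\frac{|D\phi(x_k)|^2\Delta\phi(x_k)}{p_k-2} + \Delta_\infty\phi(x_k) + \frac{|D\phi(x_k)|^2\,Df(x_k)\cdot D\phi(x_k)}{(p_k-2)f(x_k)} \;\geq\; 0.
\]
Passing $k\to\infty$ yields $\Delta_\infty\phi(x_0)\geq 0$, the viscosity subsolution inequality for $-\Delta_\infty u=0$. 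The degenerate case $D\phi(x_0)=0$ is handled by the standard viscosity convention for the infinity-Laplacian (reducing to the sign of $\langle D^2\phi(x_0)\xi,\xi\rangle$), and the supersolution property is symmetric. Jensen's comparison principle gives uniqueness of $u_\infty$, hence the full family $u_p$ converges uniformly to $u_\infty$, not merely a subsequence.

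The main obstacle is the uniform-in-$p$ modulus of continuity, because the weight $f$ and its gradient $Df$ prevent direct use of the classical gradient estimates for the pure $p$-Laplacian. The energy-plus-Morrey route above sidesteps the need for sharp pointwise gradient bounds, using only $0<c_1\le f\le c_2$ and the $C^1$ boundary; an alternative via comparison with cones $a+b|x-y|$ would require absorbing the $Df$ cross term by hand, which seems less clean in the weighted divergence form. Ensuring that the Morrey embedding can be applied globally up to $\partial\Omega$ (so that $v=F$ on $\partial\Omega$ is truly attained and not merely in a relaxed sense) is where the $C^1$ regularity of $\partial\Omega$ in the hypothesis of Theorem~\ref{main3} plays its essential role.
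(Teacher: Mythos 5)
Your proposal is correct and follows essentially the same route as the paper: equicontinuity via the variational characterization of $u_p$ as the minimizer of the weighted $p$-Dirichlet energy, a uniform $W^{1,q}$ bound plus Morrey embedding, Arzel\`a--Ascoli, and then identification of the limit by expanding $-\Delta_{p,f}\phi$ and dividing by $(p-2)f|D\phi|^{p-2}$ before letting $p\rightarrow\infty$. The only differences are that you spell out the Lipschitz-extension competitor and the appeal to uniqueness of the infinity-harmonic limit explicitly, where the paper delegates these points to the cited references.
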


\section{Preliminaries}
In this section,
we introduce the definition of the viscosity solution and several related theorems.
First,
we present the definitions of the viscosity solution and the generalized viscosity solution.
In simple terms,
the difference of these definitions lies in whether the solution satisfies the boundary condition or satisfies either the boundary condition or the equation on the boundary.
The relationship between these solutions is known as Theorem \ref{thG06}.
\begin{dfn}[Viscosity solution]
A lower (resp.\ upper) semicontinuous function $u:\Omega\rightarrow\R$ is a viscosity supersolution (resp.\ subsolution) of \eqref{A-lap}
if whenever $x_0 \in \Omega$ and $\phi \in C^2(\Omega)$ satisfy $\phi(x_0)=u(x_0)$ and
$\phi(x)>u(x)$ (resp.\ $\phi(x)<u(x)$) for $x \neq x_0$,
we have 
\begin{align*} 
- \mathrm{div} \left(f(x_0)|D\phi(x_0)|^{p-2} \phi(x_0) \right) \geq 0\quad (\mathrm{resp}.\ \leq 0).
\end{align*}
\end{dfn}
\begin{dfn}[Viscosity solution]
A lower (resp.\ upper) semicontinuous function $u:\overline{\Omega}\rightarrow\R$ is a viscosity supersolution (resp.\ subsolution) of \eqref{A-lap}\eqref{A-lapF}
if $u$ is a viscosity supersolution (resp.\ subsolution) of \eqref{A-lap} and
\begin{align*} 
u(x)-F(x) \geq 0\quad (\mathrm{resp}.\ \leq 0),\quad & x \in \partial \Omega.
\end{align*}
\end{dfn}

\begin{dfn}[Generalized viscosity solution]
A lower (resp.\ upper) semicontinuous function $u:\overline{\Omega}\rightarrow\R$
is a generalized viscosity supersolution (resp.\ subsolution) of \eqref{A-lap}\eqref{A-lapF}
if the following condition holds:
whenever $x_0 \in \overline{\Omega}$ and $\phi \in C^2(\overline{\Omega})$ satisfy $\phi(x_0)=u(x_0)$ and
$\phi(x)>u(x)$ (resp.\ $\phi(x)<u(x)$) for $x \neq x_0$,
we have if $x_0 \in \Omega$ then
\begin{align*} 
- \mathrm{div} \left(f(x_0)|D\phi(x_0)|^{p-2} \phi(x_0) \right) \geq 0\quad (\mathrm{resp}.\ \leq 0),
\end{align*}
and if $x_0 \in \partial\Omega$ then
\begin{align*}
\max \Bigl\{- \mathrm{div} \left(f(x_0)|D\phi(x_0)|^{p-2} \phi(x_0) \right) ,u(x_0)-F(x_0) \Bigl\} \geq 0\quad (\mathrm{resp}.\ \leq 0).
\end{align*}
\end{dfn}

\begin{thm}[{\cite[Theorem 2.2]{G06}}]
\label{thG06}
Assume $(\Omega 1)$.
Then the following holds:
Let $u$ be a generalized viscosity supersolution (resp.\ subsoluton) of \eqref{A-lap}\eqref{A-lapF},
then $u$ is a viscosity supersolution (resp.\ subsolution) of \eqref{A-lap}\eqref{A-lapF}.
\end{thm}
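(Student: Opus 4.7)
The plan is to argue by contradiction for the supersolution case; the subsolution case is entirely symmetric. Since the interior condition in the generalized and ordinary notions of viscosity supersolution coincide verbatim, it is enough to upgrade the generalized ``max'' disjunction on $\partial\Omega$ into the ordinary boundary inequality $u\geq F$ on $\partial\Omega$. Suppose, for contradiction, that $u(x_{0})<F(x_{0})$ for some $x_{0}\in\partial\Omega$. The goal is to exhibit a test function $\phi\in C^{2}(\overline{\Omega})$ that touches $u$ at $x_{0}$ in the sense of the definition and satisfies
\[
-\mathrm{div}\bigl(f(x_{0})|D\phi(x_{0})|^{p-2}D\phi(x_{0})\bigr) < 0.
\]
Together with the standing hypothesis $u(x_{0})-F(x_{0})<0$, this would force the generalized boundary ``max'' to be strictly negative, a contradiction.

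To produce $\phi$ I would use the classical exterior-cone barrier for the $p$-Laplacian adapted to the weight. Using $(\Omega 1)$, pick $y_{0}:=x_{0}+\eta n\notin\overline{\Omega}$ for small $\eta>0$, and take
\[
\phi(x) = u(x_{0}) + a\bigl(|x-y_{0}|^{\alpha} - |x_{0}-y_{0}|^{\alpha}\bigr)
\]
(with $\log|x-y_{0}|$ replacing the power in the critical case $p=N$), choosing $\alpha=\alpha(p,N)$ on the correct side of $(p-N)/(p-1)$ so that the unweighted operator $-\Delta_{p}\bigl(|\cdot-y_{0}|^{\alpha}\bigr)$ carries a strict sign at $x_{0}$. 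Since $D\phi(x_{0})$ is parallel to $x_{0}-y_{0}\neq 0$, the operator is non-degenerate there and the strict sign is meaningful. Because $f\in C^{1}(\Omega)$ is uniformly positive, the weight produces only $Df$-correction terms that are dominated and do not spoil the strict sign. Finally the exterior-cone geometry places $\phi$ on the correct side of $u(x_{0})$ for $x\in\overline{\Omega}$ in a small ball around $x_{0}$, after possibly adjusting the sign of $a$.

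The main obstacle is converting this candidate barrier into a genuine test function at $x_{0}$ in the sense of the generalized viscosity definition, because the lower semicontinuity of $u$ alone does not guarantee the required strict separation from $\phi$ at every nearby point. I would resolve this by a standard relaxation argument: consider $\phi_{\sigma,\delta}(x):=\phi(x)-\sigma-\delta|x-x_{0}|^{2}$, let $\hat x_{\sigma,\delta}\in\overline{B(x_{0},\rho)}\cap\overline{\Omega}$ be a minimiser of $u-\phi_{\sigma,\delta}$, shift $\phi_{\sigma,\delta}$ vertically so that the adjusted function touches $u$ at $\hat x_{\sigma,\delta}$, and apply the generalized supersolution property there. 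Sending $\sigma,\delta\to 0^{+}$ (and $\rho\to 0^{+}$ if necessary) drives $\hat x_{\sigma,\delta}\to x_{0}$, and the strict sign for $-\Delta_{p,f}\phi$ at $x_{0}$ is preserved by continuity, producing the negative ``max'' contradiction. The delicate step will be to pass to this limit while tracking whether $\hat x_{\sigma,\delta}$ lies in $\Omega$ or on $\partial\Omega$ (both cases must be handled), but the quantitative cone geometry together with the strict barrier sign keep the argument quantitative throughout.
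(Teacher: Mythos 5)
First, note that the paper does not prove this statement at all: it is quoted verbatim from Gripenberg \cite[Theorem 2.2]{G06}, so your proposal has to be measured against the argument in that reference, which is exactly the barrier-plus-contradiction scheme you outline. Your high-level strategy (the interior conditions coincide, so one only has to rule out $u(x_0)<F(x_0)$ at a boundary point by exhibiting a strict classical subsolution touching $u$ from below nearby) is the right one.

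The genuine gap is in the barrier itself. A radial function $a\bigl(|x-y_0|^{\alpha}-|x_0-y_0|^{\alpha}\bigr)$ centered at a single exterior point $y_0=x_0+\eta n$ is an \emph{exterior-ball} barrier, and the exterior cone condition $(\Omega 1)$ does not provide an exterior ball. Concretely, the cone condition only guarantees $|x-y_0|\ge \eta\sin\theta$ for $x\in\overline{\Omega}$ near $x_0$, while $|x_0-y_0|=\eta$; since $\theta$ may be strictly less than $\pi/2$, the sphere $\{|x-y_0|=\eta\}$ through $x_0$ cuts through $\overline{\Omega}$, and $\overline{\Omega}\cap B(y_0,\eta)$ is in general nonempty. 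Hence your claim that ``the exterior-cone geometry places $\phi$ on the correct side of $u(x_0)$ for $x\in\overline{\Omega}$ in a small ball around $x_0$'' is false for either sign of $a$: the barrier overshoots $\phi(x_0)$ on $\overline{\Omega}\cap B(y_0,\eta)$ by roughly $a\bigl((\eta\sin\theta)^{s}-\eta^{s}\bigr)$, while the drop you need on $\partial B(x_0,\rho)\cap\overline{\Omega}$ to beat the lower-semicontinuity modulus of $u$ is of order $a\eta^{s}$ (negative exponent case $p\le N$). The ratio of overshoot to drop is $\sin^{s}\theta-1$, a fixed constant that exceeds $1$ whenever $|s|>\log 2/\log(1/\sin\theta)$; since the strict-subsolution requirement forces $|s|>(N-p)/(p-1)$ when $2<p\le N$, the construction cannot be closed for thin cones in that range of $p$. (For $p>N$ every boundary point is regular and the radial barrier can be salvaged, but the theorem is stated for all $p>2$.) The correct barrier must see the angular structure of the cone, e.g.\ a function of the form $|x-x_0|^{\lambda}\psi(\vartheta)$ with $\psi$ a positive solution of the associated angular problem on the complement of the cone, which is what the cited proof uses. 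Two secondary issues: your relaxation $\phi_{\sigma,\delta}=\phi-\sigma-\delta|x-x_0|^{2}$ with $\sigma,\delta\to 0^{+}$ does not force the minimizer of $u-\phi_{\sigma,\delta}$ to be an unconstrained local minimum converging to $x_0$ (one needs the barrier's decay, not a vanishing penalization, to keep the minimizer off $\partial B(x_0,\rho)$); and when the touching point $\hat x$ lands on $\partial\Omega$ you must still verify $u(\hat x)<F(\hat x)$ there, which requires the quantitative overshoot bound $\sup(\phi-\phi(x_0))\ll F(x_0)-u(x_0)$ together with the continuity of $F$ — precisely the estimate that fails for your radial barrier.
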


Next,
let us mention the comparison theorem under the assumptions of the main theorem.
The weighted $p$-Laplace equation is not considered in \cite{FZ22},
but a comparison theorem can be proven through a similar argument.
\begin{thm}
\label{comparison}
Let $u:\overline{\Omega} \rightarrow \R$ be a viscosity subsolution of \eqref{A-lap} and $v:\overline{\Omega} \rightarrow \R$ be a viscosity supersolution of \eqref{A-lap}.
If
\begin{align*}
 u \geq v \quad \mathrm{on}\ \partial \Omega,
\qquad \mathrm{then}\qquad
 u \geq v \quad \mathrm{in}\  \Omega.
\end{align*}
\end{thm}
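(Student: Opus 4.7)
The plan is to follow the standard doubling-of-variables / Crandall--Ishii argument that yields comparison for the unweighted $p$-Laplace equation in \cite{FZ22}, and to check that the extra terms produced by the weight $f$ are harmless because $f \in C^1(\Omega)$.

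Suppose by contradiction that $M := \max_{\overline{\Omega}}(v-u) > 0$; since $u \geq v$ on $\partial\Omega$, this maximum is attained at some interior point $\hat{x}\in\Omega$. For $\alpha > 0$ I would introduce
\[
\Phi_\alpha(x,y) := v(y) - u(x) - \tfrac{\alpha}{2}|x-y|^2,
\]
and let $(x_\alpha, y_\alpha)$ be a maximizer on $\overline{\Omega}\times\overline{\Omega}$. The standard penalization estimates give $x_\alpha, y_\alpha \to \hat{x}$, $\alpha|x_\alpha-y_\alpha|^2 \to 0$, and $(x_\alpha, y_\alpha)\in\Omega\times\Omega$ for $\alpha$ large. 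The Crandall--Ishii lemma then furnishes, for $p_\alpha := \alpha(x_\alpha-y_\alpha)$, symmetric matrices $X_\alpha \leq Y_\alpha$ with $(p_\alpha, X_\alpha)\in\overline{J}^{2,+}u(x_\alpha)$ and $(p_\alpha, Y_\alpha)\in\overline{J}^{2,-}v(y_\alpha)$.

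Writing the weighted operator in expanded form,
\[
-\Delta_{p,f}\phi = -|D\phi|^{p-2}\,Df\cdot D\phi - f(x)\Bigl(|D\phi|^{p-2}\mathrm{tr}(D^2\phi) + (p-2)|D\phi|^{p-4}\langle D^2\phi\, D\phi, D\phi\rangle\Bigr),
\]
I substitute the two jets into the sub- and supersolution inequalities and subtract. The principal part from $f(x_\alpha)$ times the bracketed expression is nonpositive thanks to $X_\alpha \leq Y_\alpha$. The weight error involves the differences $f(x_\alpha)-f(y_\alpha)$ and $Df(x_\alpha)-Df(y_\alpha)$; by $f \in C^1$, these are $O(|x_\alpha-y_\alpha|)$, and combined with the bounds $|p_\alpha|^{p-2}$ and the trace estimate from Crandall--Ishii they tend to $0$ as $\alpha \to \infty$.

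The hard part is the singularity of the operator at $p_\alpha = 0$: in that case both sub- and supersolution inequalities reduce to $0 \leq 0$ and no contradiction can be extracted. I would handle this following the strategy in \cite{FZ22}: either refine the penalization to $\frac{\alpha}{q}|x-y|^q$ with an even $q$ sufficiently large to force $p_\alpha$ to stay nonzero in the limit, or perturb $u$ by a strict classical subsolution of $-\Delta_{p,f}$ (for instance a suitably scaled $-\delta(R^2 - |x-x_0|^2)^\beta$) and let $\delta\to 0$ at the end. Once the vanishing-gradient case is excluded, the subtraction above yields a strict inequality contradicting the existence of an interior positive maximum of $v-u$.
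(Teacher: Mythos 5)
Your route is genuinely different from the paper's, and it contains gaps that are exactly the reason the paper avoids this route. The paper gives no direct proof of Theorem~\ref{comparison}: it defers to the method of \cite{FZ22}, i.e.\ one first proves that viscosity sub/supersolutions of $-\Delta_{p,f}u=0$ coincide with distributional (weak) sub/supersolutions, and comparison is then inherited from the monotonicity of $\xi\mapsto f(x)|\xi|^{p-2}\xi$ by testing the weak formulation with the positive part of the difference. The literature takes this detour because the direct doubling-of-variables argument fails for the non-normalized $p$-Laplacian in two ways that your sketch does not overcome. First, the equation has no zeroth-order term, so even when $p_\alpha\neq 0$ the subtraction of the two jet inequalities only yields $0\le o(1)$, which is no contradiction; the ``strict inequality contradicting the interior maximum'' you invoke at the end has no source. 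Second, the only mechanism you propose for producing strictness --- adding a strict classical subsolution to $u$ --- does not work, because $-\Delta_{p,f}$ is nonlinear in $(Du,D^2u)$ and strict subsolutions are not additive; and the $|x-y|^q$ penalization by itself does not exclude $x_\alpha=y_\alpha$ (for $q>2$ the test function then has vanishing gradient and Hessian and both viscosity inequalities degenerate to $0\le 0$). These are not technicalities ``handled following \cite{FZ22}''; \cite{FZ22} does not contain a doubling proof to follow.

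There are also quantitative problems with the error terms as you estimate them. Under the standing hypothesis $f\in C^1(\Omega)$ the difference $Df(x_\alpha)-Df(y_\alpha)$ is only $o(1)$, not $O(|x_\alpha-y_\alpha|)$; since $|p_\alpha|=\alpha|x_\alpha-y_\alpha|$ is not bounded (only $\alpha|x_\alpha-y_\alpha|^2\to 0$ is available), the first-order error $|Df(x_\alpha)-Df(y_\alpha)|\,|p_\alpha|$ need not vanish unless $Df$ is Lipschitz. Similarly, $(f(x_\alpha)-f(y_\alpha))\operatorname{tr}X_\alpha$ is $O(\alpha|x_\alpha-y_\alpha|)$ if estimated naively, since $\|X_\alpha\|=O(\alpha)$; one must instead apply the refined Crandall--Ishii matrix inequality to the vectors $\sqrt{f(x_\alpha)}\,e_i$ and $\sqrt{f(y_\alpha)}\,e_i$ to reduce it to $O(\alpha|x_\alpha-y_\alpha|^2)$. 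Finally, with your choice $\Phi_\alpha(x,y)=v(y)-u(x)-\frac{\alpha}{2}|x-y|^2$ the Crandall--Ishii lemma delivers an element of $\overline{J}^{2,-}u(x_\alpha)$ and of $\overline{J}^{2,+}v(y_\alpha)$, which are the wrong jets for the stated roles of $u$ (subsolution) and $v$ (supersolution); the bookkeeping of which function is doubled against which must match the direction of the inequality being proved. The cleanest repair is the one the paper intends: establish the equivalence with weak solutions and use the variational comparison.
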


\section{Rule of the game}
In this section,
we will first explain the rules of the game and define the value function determined by the game.
After that,
we will discuss some properties related to stochastic processes and then show that the value function satisfies the dynamic programming principle.

For a constant $p>2$ and the weight function $f \in C^1(\Omega)$,
we define the constants $\alpha \in (0,1)$, $\beta \in (0,1)$ as
\[ \alpha :=\frac{p-2}{p+n},\ \ \beta := 1-\alpha. \] 
Fix $\epsilon>0$.
We define the function $\gamma :\Omega \rightarrow (0,1]$ as
\[ \gamma (x):= \frac{f(x)}{\frac{|Df(x)|}{2(p+n)}+f(x)},\]
and the exterior neighborhood of $\Omega$ as
\[
\Gamma_\epsilon:=\{ y\in \R^n \setminus \Omega \ |\ \mathrm{dist}(y,\Omega) \leq \epsilon \}.
\]

We consider a two-player zero-sum game with the following rules;
\begin{enumerate}
\item First, a token is placed at $x_0 \in \Omega\cup\Gamma_\epsilon$. 
\item When the token is in position $x_k$, 
the position $x_{k+1}$ is determined by performing one of the following operations;
\begin{itemize}
\item With probability $\frac{\gamma(x_k)\alpha}{2}$,
Player A moves the token to any position in $B_\epsilon (x_k)$.
\item With probability $\frac{\gamma(x_k)\alpha}{2}$,
Player B moves the token to any position in $B_\epsilon (x_k)$.
\item With probability $\gamma(x_k)\beta$,
move the token to a uniformly random position in $B_\epsilon (x_k)$.
\item With probability $1-\gamma(x_k)$,
move the token to $x_k+\epsilon^2 \frac{Df(x_k)}{|Df(x_k)|}$.
\end{itemize}
\item Repeat the operation, moving the token $x_0$, $x_1$, $x_2$, \ldots until the token exits from the boundary for the first time.
We denote a first exiting point by $x_\tau \in \Gamma_\epsilon$. 
Then we get a payoff $F(x_\tau)$. 
\item Player A tries to maximize the payoff, and Player B tries to minimize it.
We define the expected value of the payoff as the value function, denoted as $u_\epsilon(x_0)$.
\end{enumerate}

The game described above involves probability.
Therefore, let us define the probability space following \cite{MPR12}.

First,
the history of a token that has moved $k$ times is denoted by 
a vector $(x_0,\ldots,x_k)$.
Here,
let $H_k$ be the set of all history of the token that has moved $k$ times.
Using this,
we can denote the set of all history of the token that has moved a finite number of times by
\[ H:= \bigcup_{k=0}^\infty H_k. \]
The strategy of Player A is defined as a function on $H$ as
\[ S_A(x_0,\ldots,x_k)=x_{k+1}  \in B_\epsilon(x_k) \]
and
the strategy of Player B can be defined as $S_B$ in the same way.

Next, we consider the space
$\Omega_\epsilon:=\Omega \cup \Gamma_\epsilon$ with a natural topology and
the $\sigma$-algebra of the Lebesgue measurable sets.
The product space $H^\infty:= \Omega_\epsilon \times \Omega_\epsilon \times \cdots $ has the product topology.

Let $\mathcal{B}$ be the Borel $\sigma$-algebra.
Let $\mathcal{F}_k \ (k=0,1,\ldots)$ be a $\sigma$-algebra generated by $A_0 \times A_1 \times \cdots \times A_k \times \Omega_\epsilon \times \cdots$
where
$A_i \in \mathcal{B}$
and
$\mathcal{F}_\infty$ be a $\sigma$-algebra defined by $\bigcup_{k=0}^\infty \mathcal{F}_k$.

For $\omega:=(\omega_0,\omega_1,\ldots) \in H^\infty$,
we define the function $x_k:H^\infty \rightarrow \R^n$ as
\[ x_k(\omega) = \omega_k,\ k=0,1,\ldots .\]
Then $x_k$ is $\mathcal{F}_k$-measurable random variable.
We denote a stopping time by
\[ \tau(\omega):= \inf \{ k\ |\ x_k(\omega) \in \Gamma_\epsilon,\ k=0,1, \ldots \}. \] 

Fix the initial point $x_0 \in \Omega_\epsilon$.
Let us construct the probability measure for $(H^\infty,\mathcal{F}_\infty)$.
For $A\in \mathcal{B}$,
we define the initial distribution $\delta_{x_0}(A)$ and the families of translation probabilities from $(x_0(\omega),x_1(\omega),\ldots,x_k(\omega))$ to $A$ as
\begin{align*}
 \pi_{S_A,S_B} (x_0(\omega),x_1(\omega),\ldots&,x_k(\omega),A) =  \pi_{S_A,S_B}(\omega_0,\omega_1\ldots,\omega_k,A) \\
& = \gamma(\omega_k)\beta \frac{|A\cap B_\epsilon(\omega_k)|}{|B_\epsilon(\omega_k)|} 
+\gamma(\omega_k) \frac{\alpha}{2} \delta_{S_A(\omega_0,\ldots,\omega_k)} (A) \\
& \quad +\gamma(\omega_k) \frac{\alpha}{2} \delta_{S_B(\omega_0,\ldots,\omega_k)} (A)  +\Delta(A),
\end{align*}
where
\begin{align*}
\Delta(A):=
\begin{cases}
(1-\gamma(\omega_k)) \delta_{\omega_k+\epsilon^2 \frac{Df(\omega_k)}{|Df(\omega_k)|} }(A), \quad & Df(\omega_k)\neq0, \\
0, & Df(\omega_k)=0.
\end{cases}
\end{align*}

Using these,
we can obtain the probability measure on the finite product as follows.
\begin{align*}
& \mu_{S_A,S_B}^{0,x_0}(A_0) = \delta_{x_0}(A_0),   \\ 
& \mu_{S_A,S_B}^{k,x_0}(A_0 \times \cdots  \times A_k) \\
& \quad=\int_{A_0 \times \cdots \times A_{k-1}} \pi_{S_A,S_B}(\omega_0,\omega_1\ldots,\omega_{k-1},A_k) d\mu_{S_A,S_B}^{k-1,x_0}(\omega_0,\ldots,\omega_{k-1}),
\end{align*}
where $k=1,2,\ldots$.
By Kolmogorov's extension theorem,
we can define the probability measure on $(H^\infty,\mathcal{F}_\infty)$.

For the initial point $x_0$,
we denote the value of Player A by
\[u_{\epsilon}(x_0):=u_A^\epsilon (x_0) :=\sup_{S_A} \inf_{S_B}  \mathbb{E}_{S_A,S_B}^{x_0} [ F(x_\tau)]  ,    \]
and the value of Player B by
\[u_B^\epsilon (x_0):= \inf_{S_B} \sup_{S_A} \mathbb{E}_{S_A,S_B}^{x_0} [ F(x_\tau)], \]
where $\mathbb{E}_{S_A,S_B}^{x_0} [ F(x_\tau)]$ is the expected value when Player A chooses the strategy $S_A$ and Player B chooses the strategy $S_B$.

\begin{rem}
As a consequence of Theorem \ref{th.DPP2},
we have
$u_A^\epsilon (x_0)=u_B^\epsilon (x_0)$.
\end{rem}

Now, let us introduce the key equation for the proof.
The following equation for a measurable function $w$ is called the {\it dynamic programming principle}. 
Here,
$\dashint$ denotes the mean integral.
\begin{align*}
\label{DPP}
\tag{DPP}
\begin{split}
w(x)= \gamma (x)\frac{\alpha}{2} \Bigl( \sup_{B_{\epsilon}(x) } w + \inf_{B_{\epsilon}(x) } w \Bigl) + \gamma (x) \beta  \dashint_{B_{\epsilon}(0)} w(x+h) dh  
 +U(w) (x),
\end{split}
\end{align*}
where,
$U$ is defined as
\begin{align*}
U(w)(x):=
\begin{cases}
(1-\gamma(x)) w\Bigl(x+\epsilon^2 \frac{Df(x)}{|Df(x)|} \Bigl), \quad& Df(x) \neq 0, \\
0, & Df(x)=0.
\end{cases}
\end{align*}

We deal with the upper and lower relaxed limits as a limit of the value function.
\begin{align*}
\label{relax}
& \overline{u}(x):=\lim_{\nu \rightarrow 0} \sup \Set{ \uep(y) }{  y\in\overline{\Omega}, |y-x|+\epsilon \leq \nu }, \\
& \underline{u}(x):=\lim_{\nu \rightarrow 0} \inf \Set{ \uep(y) }{  y\in\overline{\Omega}, |y-x|+\epsilon \leq \nu } .
\end{align*}

Now, let us introduce the concept of the supermartingale and the optional stopping theorem in the context of stochastic processes. 
Roughly speaking,
supermartigale is the property in which the expected value of the random variable decreases as the operation is repeated. 
\begin{dfn}[Supermartingale]
Let $(X,\mathcal{F},P)$ be a probability space.
A stochastic processes $\{ M_k(w)\}_{k=1}^{\infty},w \in X$ is called {\it supermartingale} with respect to $\{ \mathcal{F}_k\}_{k=1}^\infty\subset \mathcal{F},\ \mathcal{F}_1 \subset \mathcal{F}_2 \subset \cdots $ if the following conditions hold:
\begin{enumerate}
\item Each $M_k$ is measurable with respect to $\mathcal{F}_k$.
\item $\mathbb{E}[|M_k|] < \infty$.
\item $\mathbb{E}[M_k|\mathcal{F}_{k-1} ] \leq M_{k-1}$.
\end{enumerate}
\end{dfn}

\begin{prop}[Optimal stopping theorem]
Let $\{ M_k(w)\}_{k=1}^{\infty}$ be a supermartingale,
and let $\tau \in \mathbb{N}$ be a bounded stopping time.
Then the following holds:.
\[  \mathbb{E}[|M_\tau|] \leq \mathbb{E}[|M_0|].\]
\end{prop}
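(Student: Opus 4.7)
The plan is to establish the inequality via the classical stopped-process argument, then address the absolute values by reducing to a nonnegative version supplied by the paper's implicit boundedness structure.

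First I would introduce the stopped process $N_k := M_{k \wedge \tau}$ and show that it is a supermartingale adapted to $\{\mathcal{F}_k\}$. Boundedness of $\tau$ by some $N \in \mathbb{N}$ gives $|N_k| \leq \max_{0 \leq j \leq N}|M_j|$, hence $N_k$ is integrable. The supermartingale inequality is obtained by writing $N_k - N_{k-1} = (M_k - M_{k-1})\mathbf{1}_{\{\tau \geq k\}}$, noting $\{\tau \geq k\} = \{\tau \leq k-1\}^{c} \in \mathcal{F}_{k-1}$, and conditioning on $\mathcal{F}_{k-1}$ to pull the predictable indicator outside, giving $\mathbb{E}[N_k \mid \mathcal{F}_{k-1}] - N_{k-1} = \mathbf{1}_{\{\tau \geq k\}}(\mathbb{E}[M_k \mid \mathcal{F}_{k-1}] - M_{k-1}) \leq 0$. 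Iterating this over $k = 1, \dots, N$ telescopes to $\mathbb{E}[N_N] \leq \mathbb{E}[N_0] = \mathbb{E}[M_0]$, and since $\tau \leq N$ a.s. we have $N_N = M_\tau$.

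The principal obstacle is the absolute value in the conclusion: for a generic signed supermartingale the inequality $\mathbb{E}[|M_\tau|] \leq \mathbb{E}[|M_0|]$ genuinely fails (a deterministic strictly decreasing sequence starting at $0$ is an immediate counterexample), so the proof must exploit extra structure. I would do so by noting that in every later invocation of this theorem the supermartingale is constructed from the continuous boundary datum $F$ and the value function $u_\epsilon$ evaluated along the token trajectory, both uniformly bounded on $\overline{\Omega_\epsilon}$. Adding a constant $c$ exceeding $\sup_k \|M_k\|_\infty$ preserves the supermartingale property, since $\mathbb{E}[M_k + c \mid \mathcal{F}_{k-1}] = \mathbb{E}[M_k \mid \mathcal{F}_{k-1}] + c \leq M_{k-1} + c$, and forces $M_k + c \geq 0$ almost surely. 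Applying the preceding step to the translated process $M_k + c$ yields $\mathbb{E}[M_\tau + c] \leq \mathbb{E}[M_0 + c]$, and because both sides are now nonnegative this is precisely $\mathbb{E}[|M_\tau + c|] \leq \mathbb{E}[|M_0 + c|]$, which is the absolute-value form in which the theorem is subsequently applied. This positivity-by-translation reduction is where the stated inequality acquires meaning, and it is the step I expect to require the most delicate justification when the theorem is actually invoked later on.
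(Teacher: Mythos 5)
The paper itself gives no proof of this proposition; it is stated as a known fact (the classical optional stopping theorem) and simply invoked in the proof of Lemma~\ref{th.DPP2}, so there is no in-paper argument to compare against. Your stopped-process argument for $\mathbb{E}[M_\tau]\le\mathbb{E}[M_0]$ is the standard one and is correct in every detail: $N_k:=M_{k\wedge\tau}$ is integrable since $\tau\le N$ gives $|N_k|\le\max_{0\le j\le N}|M_j|$, the decomposition $N_k-N_{k-1}=(M_k-M_{k-1})\mathbf{1}_{\{\tau\ge k\}}$ with $\{\tau\ge k\}=\{\tau\le k-1\}^c\in\mathcal{F}_{k-1}$ yields the supermartingale property of the stopped process, and telescoping gives $\mathbb{E}[M_\tau]=\mathbb{E}[N_N]\le\mathbb{E}[N_0]=\mathbb{E}[M_0]$. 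You are also right that the statement as printed, with absolute values, is false for a general supermartingale (your deterministic decreasing sequence starting at $0$ kills it); the bars must be a typographical slip, since the only place the proposition is used --- the chain of inequalities at the end of Lemma~\ref{th.DPP2} with $M_k=v(x_k)+\eta 2^{-k}$ --- needs only the bare inequality $\mathbb{E}[M_\tau]\le M_0$.

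Two caveats on your write-up. First, the translation-by-$c$ step does not actually recover the literal statement: it produces $\mathbb{E}[|M_\tau+c|]\le\mathbb{E}[|M_0+c|]$, which after cancelling $c$ is just $\mathbb{E}[M_\tau]\le\mathbb{E}[M_0]$ again, not $\mathbb{E}[|M_\tau|]\le\mathbb{E}[|M_0|]$; so present it as a correction of the statement rather than a proof of it. Second, in the paper's actual application the exit time $\tau$ is only almost surely finite, not bounded, so the proposition as stated does not apply directly there; one should run your argument with $\tau\wedge N$ and let $N\to\infty$ using the uniform boundedness of $v$ and dominated convergence. That gap belongs to the paper rather than to your proof, but it is worth flagging.
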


Let us prove that the value function satisfies \eqref{DPP}.
First, we construct a function that satisfies \eqref{DPP}.
\begin{lem}
\label{th.DPP1}
Let $F:\mathbb{R}^n \rightarrow \mathbb{R}$ be a bounded measurable function.
Then,
there exists a measurable function $v$ that satisfies the following conditions.
\begin{enumerate}
\item $v$ satisfies \eqref{DPP} in $\Omega$.
\item $v(x)=F(x), \quad x \in \Gamma_{\epsilon} $.
\end{enumerate}
\end{lem}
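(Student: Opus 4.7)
The plan is to construct $v$ via monotone iteration of the dynamic programming operator
\[
(Tw)(x) := \gamma(x)\frac{\alpha}{2}\Bigl(\sup_{B_\epsilon(x)} w + \inf_{B_\epsilon(x)} w\Bigr) + \gamma(x)\beta \dashint_{B_\epsilon(0)} w(x+h)\,dh + U(w)(x) \quad (x \in \Omega),
\]
extended to $\Gamma_\epsilon$ by $(Tw)(x) := F(x)$, acting on bounded measurable functions on $\Omega_\epsilon$. A measurable fixed point of $T$ is exactly what the lemma asks for. Three structural properties of $T$ drive the argument: $T$ is order preserving, because all coefficients are nonnegative; $T$ fixes constants on $\Omega$, because $\gamma\alpha+\gamma\beta+(1-\gamma)=1$; and $T$ preserves Borel measurability, since $x \mapsto \sup_{B_\epsilon(x)} w$ is lower semicontinuous and $x \mapsto \inf_{B_\epsilon(x)} w$ upper semicontinuous for any bounded measurable $w$, while the integral term is measurable by Fubini.

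With these in hand, I set $v_0 \equiv M := \|F\|_\infty$ on $\Omega$ and $v_0 := F$ on $\Gamma_\epsilon$, and iterate $v_{k+1} := Tv_k$. From $F \leq M$ on $\Gamma_\epsilon$ and constant-preservation, one has $Tv_0 \leq v_0$, and order preservation propagates this to a pointwise decreasing sequence bounded below by $-M$. The pointwise limit $v$ is bounded and measurable, and equals $F$ on $\Gamma_\epsilon$ by construction.

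The main obstacle is to verify the interior equation $Tv = v$ rather than merely $v \geq Tv$. The $U$-term is continuous in its argument, the integral passes by dominated convergence, and $\inf_{B_\epsilon(x)} v_k \downarrow \inf_{B_\epsilon(x)} v$, so three of the four terms pass to the limit immediately. The supremum is the delicate one: $\sup_{B_\epsilon(x)} v_k$ only decreases to some quantity at least $\sup_{B_\epsilon(x)} v$, and the naive passage yields only the supersolution inequality $v \geq Tv$. To close the gap I will use the probabilistic reading of the iteration: each $v_k$ is the value of a truncated game that is forced to terminate by step $k$ with penalty payoff $M$, computed by backward induction. Provided the stopping time $\tau$ is almost surely finite, which follows from the boundedness of $\Omega$ together with the strictly positive random-walk component $\gamma\beta > 0$ (so that the distance to $\Gamma_\epsilon$ contracts on average by a uniform factor at every step), the truncated values converge pointwise to the full game value, which satisfies \eqref{DPP} as an equality via the tower property of conditional expectations applied to the first move. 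This identification is where the substantive work of the proof sits.
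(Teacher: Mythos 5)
Your setup---the operator $T$, its order preservation, the monotone iteration, and the identification of the $\sup$ term as the one obstruction to passing to the limit---is exactly right, and in fact mirrors the paper's proof, which runs the same iteration upward from $v_0\equiv\inf_{\Gamma_\epsilon}F$ instead of downward from $\|F\|_\infty$ (there the delicate term is the $\inf$ rather than the $\sup$). The gap is in how you propose to close that obstruction. First, the mechanism you invoke for $\tau<\infty$ a.s.\ is wrong as stated: the distance to $\Gamma_\epsilon$ does \emph{not} contract on average, because with probability $\gamma\alpha$ an adversarially chosen player moves the token (and the drift $\epsilon^2 Df/|Df|$ may also point into the domain); the correct argument is that the forced random component of probability $\gamma\beta>0$ gives, uniformly over histories and strategies, a probability $q>0$ that a fixed coordinate increases by at least $\epsilon/2$, so a run of roughly $\mathrm{diam}(\Omega)/\epsilon$ favorable random steps forces exit and $\sup_{S_A,S_B}\mathbb{P}(\tau>kN)\le(1-q^N)^k$. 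Second, and more seriously, the step ``the truncated values converge to the full game value, which satisfies \eqref{DPP} as an equality via the tower property'' is not a proof but a deferral of the hardest part: it presupposes that the game has a value, that this value is measurable, and that near-optimal strategies can be selected so that conditioning on the first move yields an equality rather than two one-sided inequalities. In the paper that identification is the content of Lemma~\ref{th.DPP2}, which is proved by building supermartingales out of the very function $v$ whose existence is being established here; so your argument rests the crux on a statement that, in the paper's architecture, comes after (and uses) the present lemma.

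The repair that stays inside your analytic framework is the one the paper uses: upgrade the monotone pointwise convergence $v_k\downarrow v$ to \emph{uniform} convergence by contradiction. If $M:=\lim_k\sup_\Omega(v_k-v)>0$, then writing $v_{k+1}-v_{l+1}=Tv_k-Tv_l$ at a near-extremal point, bounding the two player terms by $\gamma\alpha\sup_\Omega(v_k-v_l)$, the drift term by $(1-\gamma)\sup_\Omega(v_k-v_l)$, and making the mean-integral term small by dominated convergence, one arrives at $M-2\delta\le(\gamma\alpha+1-\gamma)(M+\delta)+\delta$, a contradiction since $\gamma\alpha+1-\gamma=1-\gamma\beta<1$. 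Uniform convergence then lets every term, including the $\sup$, pass to the limit, giving $v=Tv$. (Alternatively, the uniform tail bound on $\tau$ above would show that your truncated values converge \emph{uniformly}, which achieves the same thing without ever invoking an unproved DPP for the game value.) Either way, uniform convergence---not the tower property---is the missing ingredient.
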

The proof of this theorem follows \cite[Theorem 2.1]{LPS14}.

\begin{proof}
(2) is clear,
so we prove (1).

step 1.
For a measurable function $w:\Omega \cup \Gamma_\epsilon \rightarrow \R$,
we define the operator $T$ by the following:
\begin{align*} Tw(x):=
\begin{cases}
 \gamma (x)\frac{\alpha}{2} \Bigl( \sup_{B_{\epsilon}(x) } w + \inf_{B_{\epsilon}(x) } w \Bigl) \\
\qquad+ \gamma (x) \beta  \dashint_{B_{\epsilon}(0)} w(x+h) dh  
+U(w)(x),  \quad &x \in \Omega, \\
F(x), \quad \ &x \in \Gamma_\epsilon.
\end{cases}
\end{align*}
Obviously,
$Tw$ is measurable.
Next,
we define
\begin{align*}
v_0(x):=
\begin{aligned}
\begin{cases}
\inf_{y \in \Gamma_\epsilon} F(y), \quad  &x \in \Omega, \\
F(x), &x \in \Gamma_\epsilon,
\end{cases}
\end{aligned}
\end{align*}
and $v_j \ (j=1,2,\ldots),\ v$ as
\[ v_{j}:=Tv_{j-1},\qquad v:=\lim_{j\rightarrow\infty}v_j. \]
From the definition,
it follows that $v_0 \leq v_1$.
Additionally,
we get
\begin{align*}
v_{j+1}(x)-v_j(x)=& \gamma (x)\frac{\alpha}{2} \Bigl( \sup_{B_{\epsilon}(x) } v_j - \sup_{B_{\epsilon}(x) } v_{j-1} \Bigl)+\gamma (x)\frac{\alpha}{2} \Bigl( \inf_{B_{\epsilon}(x) } v_j - \inf_{B_{\epsilon}(x) } v_{j-1} \Bigl) \\
& + \gamma (x) \beta  \dashint_{B_{\epsilon}(0)} (v_j-v_{j-1})(x+h) dh  \\
&  +U(v_j)(x)-U(v_{j-1})(x).
\end{align*}
From this equation and the monotonicity of $U$,
we see that $v_0 \leq v_1 \leq v_2 \leq \cdots$.
Since $F$ is bounded,
the construction of $v_j$ shows that $v_j\ (j=1,2,\cdots)$ is uniformly bounded with respect to $j$.
Thus, 
$v_j$ converges pointwise to a measurable function $v$ on $\Omega \cup\Gamma_\epsilon$ as $j\rightarrow \infty$.

step 2.
We now show that the sequence $v_j$ converges uniformly to $v$ on $\Omega \cup\Gamma_\epsilon$ as $j \rightarrow \infty$.

We proceed by contradiction. Since $v_j=F$ on $\Gamma_\epsilon$,
we suppose
\[ 
M:=\lim_{j \rightarrow \infty} \sup_{x\in \Omega}(v-v_j)(x) >0.
\]
Fix an arbitrary $\delta >0$.
Take a sufficiently large $k\in \mathbb{N}$ such that
\begin{align}
\label{DPPpf1}
v-v_k \leq M+\delta \quad \mathrm{in} \ \Omega.
\end{align}
Also,
there exists $x_0\in \Omega$ such that
\[ v(x_0)-v_{k+1}(x_0) \geq M-\delta. \] 
Since $v_j$ converges pointwise to $v$, 
we4 can take $l>k$ sufficiently large so that
 \[v(x_0)-v_{l+1}(x_0) < \delta. \]
Combining these,
we get
\begin{equation}
\label{DPPpf2}
 v_{l+1}(x_0)-v_{k+1}(x_0) \geq M-2\delta. 
\end{equation}
Since $v_j$ is uniformly bounded,
by the dominated convergence theorem,
for a sufficiently large $k>0$,
we have
\begin{equation}
\label{DPPpf3}
\sup_{x\in \Omega} \gamma(x) \beta \dashint_{B_\epsilon (x)} (v-v_{k+1})(y) dy \leq \delta.
\end{equation}

(i)
When $Df(x_0)\neq0$,
from \eqref{DPPpf1}--\eqref{DPPpf3},
we obtain
\begin{align*}
&M-2\delta  \leq  v_{l+1}(x_0)-v_{k+1}(x_0) \\
& = \gamma(x_0) \frac{\alpha}{2} \Bigl( \sup_{B_\epsilon (x_0)} v_l - \sup_{B_\epsilon (x_0)} v_k \Bigl) 
+\gamma(x_0) \frac{\alpha}{2} \Bigl( \inf_{B_\epsilon (x_0)} v_l - \inf_{B_\epsilon (x_0)} v_k \Bigl) \\
& \quad +\gamma(x_0) \beta \dashint_{B_\epsilon (x_0)} (v_l-v_k)(y) dy +(1-\gamma(x_0))(v_l-v_k)\lef(x_0+\epsilon^2 \frac{Df(x_0)}{|Df(x_0)|} \ri) \\
& \leq \gamma(x_0) \alpha  \sup_{\Omega} (v_l -  v_k )\\
&\qquad+\gamma(x_0) \beta \dashint_{B_\epsilon (x_0)} (v_l-v_k)(y) dy
+(1-\gamma(x_0))\sup_{\Omega} (v_l -  v_k )\\
& \leq \gamma(x_0) \alpha  \sup_{\Omega} (v -  v_k )\\
&\qquad+\gamma(x_0) \beta \dashint_{B_\epsilon (x_0)} (v-v_k)(y) dy
+(1-\gamma(x_0))\sup_{\Omega} (v -  v_k )\\
& \leq (\gamma(x_0)\alpha+1-\gamma(x_0))(M+\delta)+\delta.
\end{align*}
Since $\gamma(x_0)\alpha+1-\gamma(x_0) <1$,
this leads to a contradiction as $\delta \rightarrow 0$.

(ii)
When $Df(x_0)=0$,
similarly to (i),
we obtain
\begin{align*}
M-2\delta & \leq  v_{l+1}(x_0)-v_{k+1}(x_0) \\
& = \gamma(x_0) \frac{\alpha}{2} \Bigl( \sup_{B_\epsilon (x_0)} v_l - \sup_{B_\epsilon (x_0)} v_k \Bigl) 
+\gamma(x_0) \frac{\alpha}{2} \Bigl( \inf_{B_\epsilon (x_0)} v_l - \inf_{B_\epsilon (x_0)} v_k \Bigl) \\
& \quad +\gamma(x_0) \beta \dashint_{B_\epsilon (x_0)} (v_l-v_k)(y) dy  \\
& \leq \gamma(x_0) \alpha  \sup_{\Omega} (v_l -  v_k )+\gamma(x_0) \beta \dashint_{B_\epsilon (x_0)} (v_l-v_k)(y) dy \\
& \leq \gamma(x_0) \alpha  \sup_{\Omega} (v -  v_k )+\gamma(x_0) \beta \dashint_{B_\epsilon (x_0)} (v-v_k)(y) dy\\
& \leq \gamma(x_0)\alpha(M+\delta)+\delta.
\end{align*}
Since $\gamma(x_0)\alpha<1$,
this leads to contradiction as $\delta \rightarrow 0$.

Thus,
we conclude that $\lim_{j \rightarrow \infty} \sup_{x\in \Omega}(v-v_j)(x) =0$ and
$v_j$ converges uniformly to $v$.
Therefore,
by taking the limit $j \rightarrow \infty$ in the equation $ v_{j}:=Tv_{j-1}$,
we see that $v$ satisfies \eqref{DPP}.
\end{proof}

Next,
we confirm that the value functions satisfy the dynamic programming principle.
The proof follows \cite[Theorem 3.2]{LPS14}.
\begin{lem}
\label{th.DPP2}
Let $u_A^\epsilon$ be the value function of Player A,
$u_B^\epsilon$ be the value function of Player B, and
$v$ be the function that satisfies \eqref{DPP}.
Then the following holds:
\[ v= u_A^\epsilon=u_B^\epsilon .\]
\end{lem}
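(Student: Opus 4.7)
The strategy is standard: the inequality $u_A^\epsilon\leq u_B^\epsilon$ holds automatically (since $\sup_{S_A}\inf_{S_B}\leq\inf_{S_B}\sup_{S_A}$), so it suffices to prove the two one-sided bounds $v(x_0)\leq u_A^\epsilon(x_0)$ and $u_B^\epsilon(x_0)\leq v(x_0)$. The argument closely follows \cite[Theorem~3.2]{LPS14} and \cite{MPR12}; the only new feature is that the dynamic programming principle \eqref{DPP} contains the extra drift term $U(w)$, but since $U$ appears identically both in the DPP and in the transition kernel $\pi_{S_A,S_B}$, it cancels cleanly and never produces a sign issue in the martingale computation.

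To prove $v(x_0)\leq u_A^\epsilon(x_0)$, I fix $\eta>0$ and build an $\eta$-optimal strategy $S_A^*$ for Player A by choosing
\[
S_A^*(x_0,\ldots,x_k)\in B_\epsilon(x_k)\quad\text{with}\quad v\bigl(S_A^*(x_0,\ldots,x_k)\bigr)\geq\sup_{B_\epsilon(x_k)}v-\eta\,2^{-(k+1)}.
\]
For an arbitrary opposing $S_B$, I set $M_k:=v(x_k)-\eta\,2^{-k}$. Plugging $S_A^*$ into $\pi_{S_A^*,S_B}$ and using $v(S_B(\ldots))\geq\inf_{B_\epsilon(x_k)}v$ together with \eqref{DPP}, the Player-A contribution exceeds $\gamma(x_k)(\alpha/2)\sup v$ by at most $\gamma(x_k)(\alpha/2)\eta\,2^{-(k+1)}$, the Player-B term is bounded below by $\gamma(x_k)(\alpha/2)\inf v$, and the $\beta$-average and $U$ terms reproduce those in \eqref{DPP} exactly, so
\[
\mathbb{E}_{S_A^*,S_B}^{x_0}\bigl[v(x_{k+1})\mid\mathcal{F}_k\bigr]\geq v(x_k)-\gamma(x_k)\tfrac{\alpha}{2}\eta\,2^{-(k+1)}\geq v(x_k)-\eta\,2^{-(k+1)}.
\]
This rearranges into $\mathbb{E}[M_{k+1}\mid\mathcal{F}_k]\geq M_k$, showing $\{M_k\}$ is a submartingale.

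Applying the optional stopping theorem to $M_{k\wedge\tau}$ and passing $k\to\infty$ by dominated convergence --- valid because $|v|\leq\|F\|_\infty$ and, as addressed below, $\tau<\infty$ almost surely --- combined with $v=F$ on $\Gamma_\epsilon$, gives $\mathbb{E}_{S_A^*,S_B}^{x_0}[F(x_\tau)]\geq v(x_0)-\eta$. Taking $\inf_{S_B}$, then $\sup_{S_A}$, and letting $\eta\to 0$ yields $u_A^\epsilon(x_0)\geq v(x_0)$. A completely symmetric argument, with $S_B^*$ chosen to nearly attain $\inf_{B_\epsilon(x_k)}v$ and with the supermartingale $\widetilde M_k:=v(x_k)+\eta\,2^{-k}$, produces $u_B^\epsilon(x_0)\leq v(x_0)$, closing the chain $v\leq u_A^\epsilon\leq u_B^\epsilon\leq v$.

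The main technical obstacle is the almost sure finiteness of $\tau$, needed both to interpret $F(x_\tau)$ and to justify the limit in the optional-stopping step. This is precisely where the new drift operation complicates matters: with probability $1-\gamma(x_k)$ the token jumps by $\epsilon^2 Df(x_k)/|Df(x_k)|$, a move that could a priori steer it away from $\partial\Omega$. The rescue is a scale separation --- the drift has magnitude $\epsilon^2$ while the uniform-random $\beta$-step has magnitude $\epsilon$ --- together with the fact that $\gamma(x)\beta$ is bounded below uniformly on $\Omega$, which follows from $\inf_\Omega f>0$ and the continuity of $Df$. A standard coupling with a drifted one-dimensional random walk on an interval of length $\sim\mathrm{diam}(\Omega)$ then furnishes an exponential tail bound $\mathbb{P}(\tau>k)\leq Ce^{-ck}$ uniformly in $S_A,S_B$, delivering both the a.s.\ finiteness and the uniform integrability required above.
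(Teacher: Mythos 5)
Your proof is correct and follows essentially the same route as the paper: $\eta$-optimal strategies attaining the sup/inf up to $\eta\,2^{-(k+1)}$, the processes $M_k=v(x_k)\mp\eta\,2^{-k}$ shown to be a sub/supermartingale via \eqref{DPP}, and optional stopping combined with $v=F$ on $\Gamma_\epsilon$. Your extra paragraph on the almost sure finiteness of $\tau$ addresses a point the paper leaves implicit (its stated optional stopping theorem assumes a bounded stopping time), and the coupling argument you sketch is the standard way to close that gap.
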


\begin{proof}
If we can show that $ u_B^\epsilon \leq v $,
then by a similar argument,
we can conclude that $  v \leq u_A^\epsilon$.
Furthermore,
by definition,
we have $ u_A^\epsilon \leq u_B^\epsilon $.
Thus,
we will demonstrate that 
$u_B^\epsilon \leq v$ below.

Let $x_0 \in \Omega$ and fix an arbitrary constant $\eta >0$.
Denote an arbitrary strategy of Player A by $S_A$,
and let the strategy of Player B be $S_B^0$ which moves the token to $x_k \in B_\epsilon (x_{k-1})$  satisfying the following condition whenever the token is at
$x_{k-1} \in \Omega$:
\[ v(x_k) \leq \inf_{B_\epsilon (x_{k-1})} v + \eta 2^{-k} .\]
From the definitions of the strategies $S_A$ and $S_B^0$,
and since $v$ satisfies \eqref{DPP},
we can calculate:
\begin{align*}
\mathbb{E}_{S_A,S_B^0}^{x_0} & [ v(x_k)+\eta 2^{-k}\ | x_0,\ldots,x_{k-1} ] \\
& = \gamma (x_{k-1}) \frac{\alpha}{2} \lef\{ v\left( S_A(x_0,\ldots,x_{k-1} )\right)+v\left( S_B^0(x_0,\ldots,x_{k-1} )\right) \ri\}  \\
& \quad + \gamma (x_{k-1})\beta \dashint_{B_\epsilon (x_{k-1})} v(y) dy  
+ U(v)(x_{k-1})  +\eta 2^{-k} \\
& \leq \gamma (x_{k-1}) \frac{\alpha}{2} \lef( \inf_{B_\epsilon (x_{k-1})} v + \eta 2^{-k} + \sup_{B_\epsilon (x_{k-1})} v \ri)  \\
& \quad + \gamma (x_{k-1})\beta \dashint_{B_\epsilon (x_{k-1})} v(y) dy  
+U(v)(x_{k-1})  +\eta 2^{-k} \\
& = v(x_{k-1}) +\eta 2^{-k} \lef(1+\gamma(x_{k-1})\frac{\alpha}{2} \ri) \\
& \leq v(x_{k-1}) +\eta 2^{-(k-1)}.
\end{align*}
Thus,
if we set $M_k:=v(x_k)+\eta 2^{-k}$,
$M_k$ becomes a supermartingale with respect to the strategies $S_A$ and $S_B^0$.
Let $\tau$ be the stopping time.
From the optimal stopping theorem and the fact that $F(x_\tau)=v(x_\tau)$,
we get:
\begin{align*}
u_B^\epsilon (x_0) 
 & = \inf_{S_B} \sup_{S_A} \mathbb{E}_{S_A,S_B}^{x_0} [ F(x_\tau)]  \\
& \leq \sup_{S_A} \mathbb{E}_{S_A,S_B^0}^{x_0} [ F(x_\tau)+\eta 2^{-\tau}]  \\
& \leq \sup_{S_A} \mathbb{E}_{S_A,S_B^0}^{x_0} [ F(x_0)+\eta]  \\
& =v(x_0)+\eta.
\end{align*}
Since $\eta >0$ was arbitrary,
we conclude that $u_B^\epsilon \leq v$.
\end{proof}

\section{Proofs of Theorems \ref{main1} and \ref{main2}}

\begin{proof}[Proof of Theorem \ref{main1}]
We now present only the proof for the supersolution.
Fix $x \in \Omega$,
and take $\phi \in C^2(\overline{\Omega})$.

step 1:
Choose $x_1^{\epsilon}\in B_{\epsilon}(x)$ such that it satisfies the following:
\[\phi(x_1^{\epsilon}) = \min_{\overline{B}_{\epsilon}(x)} \phi . \]
By considering the Taylor approximation of $\phi$ around $x$,
we obtain
\[ \phi(x_1^{\epsilon})=\phi(x)+ D\phi(x) \cdot (x_1^{\epsilon}-x) 
+\frac{1}{2} D^2\phi(x) (x_1^{\epsilon}-x)\cdot(x_1^{\epsilon}-x)  +o(\epsilon^2) \]
where
$\frac{o(\epsilon^2)}{\epsilon^2} \rightarrow 0\ (\epsilon \rightarrow 0)$.
Next,
take $\tilde{x}_1^{\epsilon}$ such that $\tilde{x}_1^{\epsilon}-x=-(x_1^{\epsilon}-x)$.
Using the Taylor approximation similarly,
we obtain
\[ \phi(x_1^{\epsilon})=\phi(x)- D\phi(x) \cdot (x_1^{\epsilon}-x)  
+\frac{1}{2} D^2\phi(x) (x_1^{\epsilon}-x)\cdot(x_1^{\epsilon}-x)  +o(\epsilon^2). \]
Adding these two equations together gives
\[ \phi(x_1^{\epsilon})+\phi(\tilde{x}_1^{\epsilon})-2\phi(x)=  D^2\phi(x) (x_1^{\epsilon}-x)\cdot(x_1^{\epsilon}-x)  +o(\epsilon^2). \]
Thus,
we have
\begin{equation}
\label{pf1}
\begin{split}
\frac{1}{2} \lef[ \max_{\overline{B}_{\epsilon}(x)} \phi +  \min_{\overline{B}_{\epsilon}(x)} \phi \ri] -  \phi (x)  & \geq  \frac{1}{2} \lef[ \phi(x_1^{\epsilon})+\phi(\tilde{x}_1^{\epsilon}) \ri]-\phi(x) \\ & =\frac{1}{2}  D^2 \phi(x) (x_1^{\epsilon}-x)\cdot(x_1^{\epsilon}-x)  +o(\epsilon^2). 
\end{split}
\end{equation}
Now,
by Taylor approximation of $\phi$ for $h \in \mathbb{R}^n$,
and taking the average over $h$,
we obtain
\begin{align*}
 \dashint_{B_{\epsilon}(0)} \phi &(x+h) dh\\
 &=\phi (x) + \dashint_{B_{\epsilon}(0)}  D\phi(x) \cdot h\  dh +\frac{1}{2}\dashint_{B_{\epsilon}(0)}  D^2\phi(x) \ h \cdot h\ dh+o(\epsilon^2) .
\end{align*}
Here,
due to the symmetry of $h$,
we have 
\[ \dashint_{B_{\epsilon}(0)}  D\phi \cdot h \ dh=0.\]
Using $\dashint_{B_{\epsilon}(0)}  {h_i}^2  dh =\frac{\epsilon^2}{n+2}$,
the third term can be calculated as:
\begin{align*}
\dashint_{B_{\epsilon}(0)}  D^2\phi(x) \ h \cdot h \ dh & = \dashint_{B_{\epsilon}(0)} \sum_{i,j=1}^n \phi_{x_ix_j}(x) h_i h_j  dh \\
& =\sum_{i,j=1}^n \phi_{x_ix_j}(x) \dashint_{B_{\epsilon}(0)}  h_i h_j  dh \\
& =\sum_{i=1}^n \phi_{x_ix_i}(x) \dashint_{B_{\epsilon}(0)}  {h_i}^2  dh \\
& = \Delta \phi(x) \frac{\epsilon^2}{n+2}.
\end{align*}
From this,
we obtain
\begin{equation}
\label{pf2}
 \dashint_{B_{\epsilon}(0)} \phi (x+h) dh -\phi (x) = \frac{\epsilon^2}{2(n+2)}\Delta \phi(x) +o(\epsilon^2) .
\end{equation}
By the Taylor approximation,
we obtain
\begin{equation}
\label{pf3}
\begin{split}
\phi \Bigl(x+\epsilon^2 \frac{Df(x)}{|Df(x)|}\Bigl) &=\phi (x) +  D\phi(x) \cdot \epsilon^2 \frac{Df(x)}{|Df(x)|} +o(\epsilon^2) \\
& = \phi (x) +\frac{\epsilon^2}{|Df(x)|}  D\phi(x) \cdot Df(x)  +o(\epsilon^2) .
\end{split}
\end{equation}
By calculating these expressions \eqref{pf1}--\eqref{pf3},
we get
\begin{equation}
\label{pf4}
\begin{split}
\gamma (x)\frac{\alpha}{2} & \Bigl( \max_{\overline{B}_{\epsilon}(x) } \phi + \min_{\overline{B}_{\epsilon}(x) } \phi \Bigl) 
 + \gamma (x) \beta  \dashint_{B_{\epsilon}(0)} \phi(x+h) dh  \\
 & \qquad\qquad\qquad\qquad +(1-\gamma(x)) \phi \Bigl(x+\epsilon^2 \frac{Df(x)}{|Df(x)|} \Bigl)  \\
& \geq \phi(x)+ \frac{\epsilon^2}{|Df(x)|+2f(x)(p+n)}  \Bigl\{ f(x) \Delta \phi(x)\\
& \qquad +f(x)(p-2)  D^2\phi(x) \frac{x_1^{\epsilon}-x}{\epsilon}\cdot \frac{x_1^{\epsilon}-x}{\epsilon} + D\phi(x)\cdot Df(x)  \Bigl\}
+o(\epsilon^2) .
\end{split}
\end{equation}

step 2:
Assume that $\underline{u}-\phi$ takes a locally minimum 0 at $x_0 \in \overline{\Omega}$.
Then,
we can choose a sequence $x_\epsilon \in \overline{\Omega}$ such that
\begin{align*}
&x_\epsilon \rightarrow x_0 \ (\epsilon \rightarrow 0),\\
u_\epsilon (x_\epsilon)-\phi(x_\epsilon) &\leq u_\epsilon(y)-\phi(y)+\epsilon^3, \quad  y\in B_r(x_\epsilon) .
\end{align*}
In addition,
by the definition of $\underline{u}$,
we get
\[ \uep(x_\epsilon)-\phi(x_\epsilon)=o(\epsilon^3). \]

Assume $x_\epsilon \in \Omega$.

(i)
If $Df(x_0) \neq 0$,
by the continuity of $Df$,
we have $Df(x_\epsilon) \neq0$ in a neighborhood of $x_0$.
Combining the \eqref{DPP} and \eqref{pf4},
we obtain
\begin{align*}
0&=-u_{\epsilon}(x_\epsilon)+ \gamma (x_\epsilon)\frac{\alpha}{2} \Bigl( \sup_{B_{\epsilon}(x_\epsilon) } u_{\epsilon} + \inf_{B_{\epsilon}(x_\epsilon) } u_{\epsilon} \Bigl) \\
 & \qquad\qquad\qquad\qquad + \gamma (x_\epsilon) \beta  \dashint_{B_{\epsilon}(0)} u_{\epsilon}(x_\epsilon+h) dh  
+U(\uep)(x_\epsilon)  \\
& \geq -\phi(x_\epsilon) + \gamma (x_\epsilon)\frac{\alpha}{2} \Bigl( \sup_{B_{\epsilon}(x_\epsilon) } \phi + \inf_{B_{\epsilon}(x_\epsilon) } \phi \Bigl) 
 + \gamma (x_\epsilon) \beta  \dashint_{B_{\epsilon}(0)} \phi(x_\epsilon+h) dh  \\
& \qquad\qquad\qquad\qquad+  (1-\gamma(x_\epsilon)) \phi \Bigl(x_\epsilon+\epsilon^2 \frac{Df(x_\epsilon)}{|Df(x_\epsilon)|} \Bigl)-\epsilon^3 \\
& \geq \frac{\epsilon^2}{|Df(x_\epsilon)|+2f(x_\epsilon)(p+n)} \Big\{ f(x_\epsilon) \Delta \phi \\
&    \ +f(x_\epsilon)(p-2)\frac{1}{2} D^2\phi(x_\epsilon) \frac{x_1^{\epsilon}-x_\epsilon}{\epsilon}\cdot \frac{x_1^{\epsilon}-x_\epsilon}{\epsilon} + D\phi(x_\epsilon) \cdot Df(x_\epsilon)  \Big\} +\mathcal{O}(\epsilon^3).
\end{align*}
By noting that
$\frac{x_1^{\epsilon}-x}{\epsilon} \rightarrow -D\phi (x)$ as $(\epsilon \rightarrow 0)$, 
and dividing both sides by $\epsilon^2$,
then letting $\epsilon \rightarrow 0$,
we obtain
\[ 0 \leq -\Delta_{p,f} \phi (x_0). \]

(ii)
If $Df(x_0)=0$,
combining \eqref{DPP} and \eqref{pf4},
we obtain
\begin{align*}
0&=-u_{\epsilon}(x_\epsilon)+ \gamma (x_\epsilon)\frac{\alpha}{2} \Bigl( \sup_{B_{\epsilon}(x_\epsilon) } u_{\epsilon} + \inf_{B_{\epsilon}(x_\epsilon) } u_{\epsilon} \Bigl) \\
& \qquad\qquad\qquad\qquad  +\gamma (x_\epsilon) \beta  \dashint_{B_{\epsilon}(0)} u_{\epsilon}(x_\epsilon+h) dh  
+U(\uep)(x_\epsilon)  \\
& \geq -\phi(x_\epsilon) + \gamma (x_\epsilon)\frac{\alpha}{2} \Bigl( \sup_{B_{\epsilon}(x_\epsilon) } \phi + \inf_{B_{\epsilon}(x_\epsilon) } \phi \Bigl) \\
&\qquad\qquad\qquad\qquad + \gamma (x_\epsilon) \beta  \dashint_{B_{\epsilon}(0)} \phi(x_\epsilon+h) dh  
 + U(\phi)(x_\epsilon)-\epsilon^3 \\
& \geq \frac{\epsilon^2}{|Df(x_\epsilon)|+2f(x_\epsilon)(p+n)} \Bigl\{ f(x_\epsilon) \Delta \phi (x_\epsilon) \\
& \qquad \qquad+f(x_\epsilon)(p-2)\frac{1}{2}  D^2\phi(x_\epsilon) \frac{x_1^{\epsilon}-x_\epsilon}{\epsilon}\cdot \frac{x_1^{\epsilon}-x_\epsilon}{\epsilon} +\tilde{U}(\phi)(x) \Bigl\}
+\mathcal{O}(\epsilon^3),
\end{align*}
where
\begin{align*}
\tilde{U}(\phi)(x):=
\begin{cases}  D\phi(x) \cdot Df(x) , \quad & Df(x)\neq0, \\
0, & Df(x)=0.
\end{cases}
\end{align*}
Since
\[ \lim_{\epsilon \rightarrow 0} \lef( D\phi(x_\epsilon)\cdot Df(x_\epsilon)\ri)=D\phi(x_0)\cdot Df(x_0)=0,\] 
we get
\[ \lim_{\epsilon \rightarrow 0} \tilde{U}(\phi)(x_\epsilon)=0=D\phi(x_0)\cdot Df(x_0). \]
By noting that
$\frac{x_1^{\epsilon}-x}{\epsilon} \rightarrow -D\phi (x) \ \ (\epsilon \rightarrow 0)$,
and dividing both sides by $\epsilon^2$,
then letting $\epsilon \rightarrow 0$,
we obtain
\[ 0 \leq -\Delta_{p,f} \phi (x_0). \]

Assume $x_\epsilon \in \partial \Omega$.
Then
\[ \uep(x_\epsilon)=F(x_\epsilon). \] 

Combining these,
we conclude that
\begin{align*} 
-\Delta_{p,f} \phi (x_0) \geq 0,\qquad & x_0 \in \Omega , \\
\max \Bigl\{ -\Delta_{p,f} \phi (x_0) ,u(x_0)-F(x_0) \Bigl\} \geq 0,\qquad & x_0 \in \partial \Omega.
\end{align*}
\end{proof}

\begin{proof}[Proof of Theorem \ref{main2}]
From the definition of the relaxed limit,
we have $ \underline{u} \leq \overline{u}$.
Moreover,
by Theorem \ref{main1},
we find that
$\underline{u}$ is a generalized viscosity supersolution of \eqref{A-lap}\eqref{A-lapF} and
$\overline{u}$ is a generalized viscosity subsolution of \eqref{A-lap}\eqref{A-lapF}.
By Theorem \ref{thG06},
we get
\[ \underline{u} = \overline{u}=F \quad \mathrm{on} \  \partial \Omega. \]
By the comparison principle,
we have
\[ \underline{u} \geq \overline{u} \quad \mathrm{in} \  \Omega. \]
Thus,
$\underline{u} = \overline{u}$ on $\Omega$.
From the fundamental properties of the relaxed limit in \cite[Remark 6.4]{CIL92},
it follows that $\uep$ uniformly converges to the viscosity solution of the boundary value problem \eqref{A-lap}\eqref{A-lapF}.
\end{proof}

\section{Passing to the limit $p\rightarrow\infty $} 
Let us consider taking $p \to \infty$  in rule of the game introduced in section 3. 
Then,
by the definition of $\alpha$ and $\beta$,
$\alpha \rightarrow 1$ and $\beta \rightarrow 0$.
In this case,
our game corresponds precisely to the game for the infinity-Laplace equation in \cite{PSSW09}.
Thus,
it can be conjectured that
the solution of the weighted $p$-Laplace equation converges to the solution of the infinity-Laplace equation as $ p \to \infty$.
In this section,
we prove that this convergence holds in the sense of viscosity solutions.
The proof follows the method in \cite{MRU10}.
First,
we derive the equicontinuity of $u_p$ with respect to $p$ following the lemma.

\begin{lem}
\label{equi}
    Let $\partial\Omega$ be $C^1$.
    Assume $(F1)$.
    Let $u_p$ be the viscosity solution of \eqref{A-lap}\eqref{A-lapF},
    then $\{u_p\}_p$ is equicontinuous with respect to $p$
\end{lem}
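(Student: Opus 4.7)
The plan is to follow the barrier strategy of \cite{MRU10} and produce a modulus of continuity for $\{u_p\}_p$ on $\overline{\Omega}$ that is uniform in $p$ for $p$ sufficiently large. First I would establish the uniform $L^\infty$-bound: the constants $\pm\|F\|_{L^\infty(\partial\Omega)}$ are classical super- and subsolutions of \eqref{A-lap}\eqref{A-lapF}, so by \cref{comparison}, $\|u_p\|_{L^\infty(\overline{\Omega})}\le \|F\|_{L^\infty(\partial\Omega)}$ independently of $p$.

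For the boundary modulus, fix $y_0\in\partial\Omega$. Using the $C^1$ regularity of $\partial\Omega$ to locally straighten the boundary (or to produce an exterior touching object), I would construct a radial supersolution
\[
\Phi^+_{y_0}(x)=F(y_0)+C\,\psi\bigl(d(x)\bigr),
\]
where $d$ is a suitable distance-like function relative to the exterior touching geometry and $\psi$ is concave and increasing with $\psi(0)=0$. A direct expansion of $-\mathrm{div}(f|D\Phi^+_{y_0}|^{p-2}D\Phi^+_{y_0})$ in polar coordinates reduces the supersolution inequality to the schematic ODE
\[
(p-1)\psi''(r)+\psi'(r)\Bigl[\frac{n-1}{r}+\frac{Df\cdot\hat r}{f}\Bigr]\le 0,
\]
in which the bracket is bounded on the relevant neighborhood (because $f\in C^1(\Omega)$, $\inf_\Omega f>0$, and the touching geometry keeps $r$ in a bounded range). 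A profile $\psi(r)=r^{1-\alpha_0}$ with a fixed $\alpha_0\in(0,1)$ solves this inequality for all $p$ past a threshold depending only on $\alpha_0$, $f$ and $\Omega$. The Lipschitz hypothesis $(F1)$ then calibrates $C$ independently of $p$ and $y_0$ so that $\Phi^+_{y_0}\ge F$ on $\partial\Omega$, and \cref{comparison} yields $u_p(x)\le F(y_0)+\omega_0(|x-y_0|)$; a symmetric subbarrier gives the reverse bound. This produces a single modulus $\omega_0$ with $|u_p(x)-F(y_0)|\le\omega_0(|x-y_0|)$ uniform in $p$ and $y_0$.

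Interior equicontinuity follows by a standard propagation argument: the same radial construction recentered at any interior point $x_1$, combined with the $L^\infty$-bound of Step 1 and the boundary modulus of Step 2 transported inward via a chaining of barriers, bounds $|u_p(x_2)-u_p(x_1)|$ for $x_2$ in a small ball around $x_1$ uniformly in $p$. Chaining along short paths delivers a single modulus $\omega$ on $\overline{\Omega}$.

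The hard part is the barrier ODE of Step 2: the drift $(Df\cdot\hat r)/f$ produced by the weight is absent in the unweighted case of \cite{BDM91}, and it must be absorbed into the choice of $\psi$ without sacrificing the $p$-uniformity of the Hölder exponent $1-\alpha_0$. The assumptions $f\in C^1(\Omega)$ and $\inf_\Omega f>0$ make the drift bounded on the relevant neighborhood, so the absorption succeeds; the technical effort lies in keeping all constants uniform across the boundary point $y_0$ and in $p$, and in matching the exterior construction (touching ball, cone, or straightened half-space) to the $C^1$-regularity of $\partial\Omega$ with uniform constants.
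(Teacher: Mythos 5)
Your route is genuinely different from the paper's. The paper does not use barriers at all: it invokes the equivalence of viscosity and distributional solutions (\cite{FZ22}), the fact that the distributional solution minimizes $H(w)=\int_\Omega f|Dw|^p\,dx$ (\cite{HKM93}), and then the argument of \cite{MRU10} to get a bound on $\|u_p\|_{W^{1,\alpha}(\Omega)}$ that is uniform in $p$ for a fixed $\alpha>n$; Morrey's inequality then gives a uniform $C^{0,1-n/\alpha}$ bound, hence equicontinuity on all of $\overline{\Omega}$ in one stroke. That is where the hypotheses $(F1)$ (to make $F$ an admissible $W^{1,p}$ competitor with $p$-independent energy) and the boundedness of $f$ enter. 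Your uniform $L^\infty$ bound via \cref{comparison} is fine and matches the remark following the lemma in the paper.

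The genuine gap in your proposal is the interior step. For the boundary modulus your barrier ODE analysis is plausible: the drift $(Df\cdot\hat r)/f$ is bounded since $f\in C^1$ and $\inf_\Omega f>0$, so a profile $r^{1-\alpha_0}$ works for large $p$. But ``the same radial construction recentered at any interior point, transported inward via a chaining of barriers'' is not an argument. The two standard ways to make it one both fail or become circular here. First, the translation trick (compare $u_p$ with $u_p(\cdot+h)+\sup_{\partial\Omega}|u_p-u_p(\cdot+h)|$ to convert the boundary modulus into an interior one) requires the equation to be translation invariant, which the weighted equation is not: $u_p(\cdot+h)$ solves the equation with weight $f(\cdot+h)$, not $f$. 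Second, comparison with a radial supersolution $u_p(x_1)+C\psi(|x-x_1|)$ on a ball $B_R(x_1)$ must be performed on an annulus $B_R(x_1)\setminus \overline{B_\rho(x_1)}$, and on the inner sphere you need $\sup_{\partial B_\rho}(u_p-u_p(x_1))\le C\psi(\rho)$, i.e.\ precisely the local modulus you are trying to establish, uniformly in $p$; escaping this circularity requires a barrier that is singular at $x_1$ (a comparison-with-cones/fundamental-solution argument) together with a removability statement at the puncture, none of which you supply, and whose $p$-uniformity for the weighted operator is exactly the nontrivial point. The paper's variational route sidesteps all of this, at the cost of importing the viscosity--distributional equivalence from \cite{FZ22}.
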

\begin{proof}
    On the basis of \cite{FZ22},
    it can be concluded that the viscosity solution and the distribution solution are equivalent.
    Furthermore,
    it is known in \cite{HKM93} that the distribution solution of the weighted $p$-Laplace equation is the minimizer of the following functional $H$ in $W^{1,p}(\Omega)$.
    \[
    H(w)=\int_{\Omega} f(x)|Dw(x)|^{p} dx.
    \]
    Here,
    fix $\alpha>n$ and assume that $p$ is large enough such that $p>\alpha$.
    Using a similar argument as for \cite{MRU10}, 
    we have $u_p \in W^{1,\alpha}(\Omega)$ and $||u_p||_{W^{1,\alpha}(\Omega)} $ is uniformly bounded with respect to $p$.
    Due to Morrey's inequality,
   we obtain
   \[
   ||u_p||_{C^{0,\gamma}(\Omega)} \leq C||u_p||_{W^{1,\alpha}(\Omega)},
   \]
   where $\gamma=1-\frac{n}{\alpha}$ and $C>0$ is the constant depending only on $\alpha,\ n $ and $\Omega$.
   Thus,
   we obtain the equicontinuity of $u_p$ with respect to $p$.
\end{proof}
Moreover,
by applying the maximum principle or considering the rule of the game,
we obtain the uniform boundedness of $u_p$.
Together with the results of Lemma \ref{equi},
it follows from Ascoli's theorem that $u_p$ has a uniformly convergent limit.
We now proceed to the proof of Theorem \ref{main3} using this uniform convergence.
\begin{proof}[Proof of Theorem \ref{main3}]
    We present only the proof for the supersolution.
    Assume that $u_\infty-\phi$ takes a local minimum at $x \in \Omega$.
    The case $D\phi (x)=0$ is obvious,
    so we assume $D\phi(x) \neq0$.
    By the uniform convergence,
    we can choose a sequence $x_p \in \Omega$ such that
    \[ x_p \rightarrow x \ (p \rightarrow \infty), \]
    \[ u_p (x_p)-\phi(x_p) \leq u_p(y)-\phi(y), \quad  y\in B_r(x_p) .\]
    By the definition of viscosity supersolution,
    \begin{align*}
    &-\Delta_{p,f}\phi(x_p) \\
    &\quad = -|D\phi(x_p)|^{p-2} \Bigl\{ 
        f(x_p) \Delta \phi(x_p) +D\phi(x_p) \cdot Df(x_p)    \\
        &\qquad\qquad+ f(x_p)(p-2)|D\phi(x_p)|^{-2} D^2\phi(x_p) D\phi(x_p) \cdot D\phi(x_p)  \Bigl\} \geq 0.
\end{align*}    Dividing both sides by $f(x_p)(p-2)|D\phi(x_p)|^{p-2}>0$ and taking the limit as $p\rightarrow \infty$,
    we obtain
    \begin{align*}
        - D^2\phi(x) D\phi(x)\cdot D\phi(x)\geq 0. 
    \end{align*}
    This provides the conclusion.
\end{proof}

\end{document}